\documentclass[letterpaper,11pt]{amsart}

\usepackage{amsmath}
\usepackage{amssymb}
\usepackage{amsthm}

\newtheorem{theorem}{Theorem}

\newtheorem{lemma}[theorem]{Lemma}

\newtheorem{conj}[theorem]{Conjecture}

\newcommand{\CC}{\mathbb{C}}
\newcommand{\EE}{\mathbb{E}}
\newcommand{\JJ}{\mathbb{J}}
\newcommand{\KK}{\mathbb{K}}
\newcommand{\NN}{\mathbb{N}}
\newcommand{\RR}{\mathbb{R}}
\newcommand{\VV}{\mathbb{V}}
\newcommand{\ZZ}{\mathbb{Z}}

\newcommand{\cF}{\mathcal{F}}
\newcommand{\cN}{\mathcal{N}}

\newcommand{\be}{\mathbf{e}}
\newcommand{\br}{\mathbf{r}}
\newcommand{\bs}{\mathbf{s}}

\newcommand{\bx}{\mathbf{x}}

\newcommand{\hz}{\hat{0}}

\DeclareMathOperator{\Res}{Res}

\DeclareMathOperator{\cov}{cov}
\renewcommand{\thefootnote}{\fnsymbol{footnote}}


\begin{document}
\title[HyperMatrix Spectra and the Product Formula]{Computing Hypermatrix Spectra with the Poisson Product Formula}
\maketitle
\begin{center} Joshua Cooper\footnote{\noindent cooper@math.sc.edu, This work was funded in part by NSF grant DMS-1001370.} and Aaron Dutle\footnote{\noindent dutle@mailbox.sc.edu, Corresponding author. } \\ Department of Mathematics\\ University of South Carolina \\1523 Greene St.\\ Columbia, SC, 29208\let\thefootnote\relax\footnote{\noindent Subject Classification: Primary 15A69; Secondary 15A18, 05C65.} \let\thefootnote\relax\footnote{ Keywords: Hypermatrix, Spectrum, Resultant, Hypergraph.}
\end{center}

\begin{abstract}
We compute the spectrum of the ``all ones'' hypermatrix using the Poisson product formula.  This computation includes a complete description of the eigenvalues' multiplicities, a seemingly elusive aspect of the spectral theory of tensors.  We also give a general distributional picture of the spectrum as a point-set in the complex plane, and use our techniques to analyze the spectrum of ``sunflower hypergraphs'', a class that has played a prominent role in extremal hypergraph theory.
\end{abstract}

\section{Introduction}

There are few extant techniques that allow one to determine the characteristic polynomial of a symmetric hypermatrix in the sense of Qi (\cite{Qi_2005}).  Indeed, the lack of a simple formula for the symmetric hyperdeterminant makes many questions about hypermatrices much more difficult than the same questions for matrices.  In particular, we know of no simple way of determining the algebraic multiplicities of a hypermatrix's eigenvalues.

The characteristic polynomial and the symmetric hyperdeterminant are both defined using polynomial resultants, and so finding better methods of computing the resultant is one way to approach this problem. There is a tool for inductively computing resultants, the so-called  ``Poisson product formula'', which is given in \cite{Cox_Little_OShea_2005}.  Presently, we show that it can be used to compute the characteristic polynomial for some types of hypermatrices, thereby completely describing their spectra.  The statement of the product formula (from \cite{Cox_Little_OShea_2005}) is as follows.

\begin{theorem}
Let \( F_0, F_1, \ldots,  F_n \) be homogeneous polynomials of respective degrees \( d_0, \ldots, d_n \) in \( K[x_0, \ldots, x_n] \)  where \( K \) is an algebraically closed field. For \( 0\leq i\leq n, \) let \( \overline{F_i} \) be the homogeneous polynomial in \( K[x_1, \ldots, x_n] \) obtained by substituting \( x_0=0 \) in \( F_i \), and let \( f_i \) be the polynomial in \( K[x_1, \ldots, x_n] \) obtained by substituting \( x_0=1 \) in \( F_i. \) Let \( V \) be the set of simultaneous zeros of the system  of polynomials \( \{f_1, f_2, \ldots, f_n\}, \) that is, \( V \) is the affine variety defined by the polynomials.   If \( \Res(\overline{F_1}, \overline{F_2}, \ldots, \overline{F_n}) \neq 0 \), then \( V \) is a zero-dimensional variety (a finite set of points), and

\[\Res(F_0, F_1, \ldots, F_n) =
\Res(\overline{F_1}, \overline{F_2}, \ldots, \overline{F_n})^{d_0}\prod_{p\in V} f_n(p)^{m(p)}\]
where \( m(p) \) is the \emph{multiplicity} of a point \( p\in V. \)  \end{theorem}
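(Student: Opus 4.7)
The plan is to combine the factorization of the multipolynomial resultant as a product over the common projective zeros of $F_1, \ldots, F_n$ with the identification of the ``leading'' factor from the behavior at infinity. First I would use the nonvanishing hypothesis $\Res(\overline{F_1}, \ldots, \overline{F_n}) \neq 0$ to establish the zero-dimensionality claim: any common projective zero of $F_1, \ldots, F_n$ lying on the hyperplane $x_0 = 0$ would descend to a common zero of $\overline{F_1}, \ldots, \overline{F_n}$ in $\mathbb{P}^{n-1}$, contradicting the hypothesis. Hence every common zero $\xi$ of $F_1, \ldots, F_n$ in $\mathbb{P}^n$ lies in the chart $\{x_0 \neq 0\}$ and is uniquely represented as $\xi = (1,p)$ with $p \in V$. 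A projective B\'ezout argument then bounds the zero set by $d_1 d_2 \cdots d_n$ points counted with intersection multiplicity, and the absence of points at infinity forces these projective intersection multiplicities to agree with the local affine multiplicities $m(p)$, since the relevant local rings in $\mathbb{P}^n$ and in $\mathbb{A}^n$ coincide at each such $p$.

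Next I would view $\Res(F_0, F_1, \ldots, F_n)$ as a polynomial in the coefficients of $F_0$ with $F_1, \ldots, F_n$ held fixed. By the standard degree property of the resultant, this polynomial has degree $d_1 d_2 \cdots d_n$ in those coefficients. For each $p \in V$, the linear form $f_0(p)$ (a linear form in the coefficients of $F_0$, since $F_0(1,p) = f_0(p)$) divides the resultant, because forcing $F_0$ to vanish at $(1,p)$ creates a common projective zero of all $n+1$ forms. A local deformation argument (perturb $F_0$ and count the simple zeros that collide at $(1,p)$), or equivalently a local Koszul computation in the zero-dimensional quotient $K[x_1,\ldots,x_n]/(f_1,\ldots,f_n)$, shows that $f_0(p)$ divides to exactly the power $m(p)$. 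Matching the total degree $\sum_{p\in V} m(p) = d_1\cdots d_n$, I obtain the factorization
\[
\Res(F_0, F_1, \ldots, F_n) \;=\; C(F_1, \ldots, F_n)\cdot \prod_{p \in V} f_0(p)^{m(p)}
\]
where $C$ depends only on $F_1, \ldots, F_n$.

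Finally I would pin down $C$ by specializing $F_0 = x_0^{d_0}$, which makes $f_0 \equiv 1$ and collapses the product. On the other side, the multiplicativity of the resultant in its first slot gives $\Res(x_0^{d_0}, F_1, \ldots, F_n) = \Res(x_0, F_1, \ldots, F_n)^{d_0}$, and the standard elimination identity $\Res(x_0, F_1, \ldots, F_n) = \Res(\overline{F_1}, \ldots, \overline{F_n})$ (obtained by setting $x_0 = 0$ in the remaining forms and using that the resultant is the unique elimination invariant of the appropriate degree) identifies $C = \Res(\overline{F_1}, \ldots, \overline{F_n})^{d_0}$, completing the formula.

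The main obstacle is the exact multiplicity matching: verifying that the linear factor $f_0(p)$ appears in the resultant with multiplicity exactly $m(p)$ rather than something larger or smaller, and equivalently that the projective intersection multiplicities at $(1,p)$ agree with the affine multiplicities defined by the local ring of $V$ at $p$. This is the technical heart of the argument, and it relies crucially on the hypothesis that no zeros lie at infinity: otherwise multiplicity could ``leak'' into $\{x_0 = 0\}$ and the bookkeeping between total degree $d_1\cdots d_n$ and $\sum m(p)$ would fail.
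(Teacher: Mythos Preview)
The paper does not prove this theorem; it is quoted from \cite{Cox_Little_OShea_2005} and used as a black box throughout. So there is no ``paper's own proof'' to compare against.

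Your sketch follows the standard line of argument and is essentially sound. You have also silently corrected a typo in the statement: as printed, the product runs over $f_n(p)$, but since $V$ is cut out by $f_1,\ldots,f_n$ this would vanish identically. The intended factor is $f_0(p)$, which is what you write and what the paper actually uses in its applications (with the distinguished polynomial relabeled $F_n$ in Section~2 and kept as $F_0$ in Section~\ref{sect:sunflowers}).

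The one genuinely soft spot is exactly where you flag it: showing that $f_0(p)$ divides the resultant to order \emph{precisely} $m(p)$. Your deformation/Koszul sketch can be made to work, but a cleaner route---and the one Cox--Little--O'Shea take---avoids it. Under the hypothesis of no zeros at infinity, one identifies $\Res(F_0,\ldots,F_n)$, as a function of $F_0$ with $F_1,\ldots,F_n$ fixed, with a nonzero scalar times $\det(m_{f_0})$, where $m_{f_0}$ is multiplication by $f_0$ on the finite-dimensional algebra $A = K[x_1,\ldots,x_n]/(f_1,\ldots,f_n)$. Since $A \cong \prod_{p\in V} A_p$ with $\dim_K A_p = m(p)$, and $f_0 - f_0(p)$ is nilpotent on $A_p$, the operator $m_{f_0}$ has characteristic polynomial $\prod_p (t - f_0(p))^{m(p)}$, giving $\det(m_{f_0}) = \prod_p f_0(p)^{m(p)}$ directly. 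Your specialization $F_0 = x_0^{d_0}$ to pin down the constant then finishes the job exactly as you describe.
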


A few notes are in order. In our case, our field will be \( K = \overline{\mathbb{C}[\lambda]} \), the algebraic closure of the (fraction field of) polynomials in an indeterminant \( \lambda \) with complex coefficients.  All computations in the sequel will concern characteristic polynomials of hypermatrices with complex entries, which are always monic (more importantly, nonzero), so the condition \( \Res(\overline{F_1}, \overline{F_2}, \ldots, \overline{F_n}) \neq 0 \) always holds.  The \emph{multiplicity} referred to above is defined in \cite{Cox_Little_OShea_2005}, but will be inconsequential in our uses. In each case, we show that every point of the variety has multiplicity at most one, and so identically one. Indeed, the variety \( V \) referred to above has at most \( \prod_{i=1}^n \deg(f_i) \) points in it by Bezout's theorem, and in our applications we exhibit exactly this many distinct points.

\section{The All-Ones Hypermatrix}

Fix \( m \geq 2 \).  Let \( \JJ_n^m \) denote the order \( m \), dimension \( n \) hypermatrix consisting of all ones.  We write \( \hat{0} \) and \( \hat{1} \) for the all-zeros and all-ones vectors of dimension \( n \), respectively.  For \( i \in [n] \), define
\[
F_i = \lambda x_i^{m-1} - \left ( \sum_{j=1}^n x_j \right )^{m-1}.
\]
Evidently, the characteristic polynomial of \( \JJ_n^m \) is \( \phi_{\JJ^m_n}(\lambda) = \phi(\lambda) = \Res(F_1,\ldots,F_n) \), a polynomial in \(\lambda\) of degree \(n (m-1)^{n-1}\).  First, we consider the spectrum of \( \JJ_n^m \) as a set, i.e., \( S = \{\lambda : \phi(\lambda)=0\} \), or alternatively,
\[
S = \left \{ \lambda : \exists \bx \forall i \in [n] (F_i(\bx) = 0) \right \}.
\]
Note that \( \bx = (1,-1,0,0,\ldots,0) \) is a solution of the system \( \cF = \{F_i = 0\}_{i=1}^n \) for \( \lambda = 0 \), so \( 0 \in S \).  Now, assume \( \lambda \neq 0 \), and let \( \bx \) be an eigenvector of \( \lambda \).  Note that \( \bx \) has all nonzero entries, since otherwise \( x_k = 0 \) would imply
\[
\lambda x_k^{m-1} = 0 = \left ( \sum_{j=1}^n x_j \right )^{m-1},
\]
so that \( \lambda x_i^{m-1} = 0 \), i.e., \( x_i = 0 \), for all \( i \in [n] \), whence \( \bx = \hz \), a contradiction.

Next, note that
\begin{equation} \label{eq5}
x_i^{m-1} = \frac{\left ( \sum_{j=1}^n x_j \right )^{m-1}}{\lambda}
\end{equation}
for all \( i \in [n] \).  Since the right-hand side of (\ref{eq5}) is independent of \( i \), we have that \( x_i^{m-1} = x_j^{m-1} \) for all \( i,j \in [n] \).  Since the \( F_i \) are homogeneous (in \( \bx \)), we may assume that \( x_1 = 1 \) and therefore \( x_i = \zeta_{m-1}^{a_i} \) for all \( i \in [n] \), where \( \zeta_q \) denotes a primitive \( q \)-th root of unity.  Therefore, each eigenvector \( \bx \) corresponding to a nonzero \( \lambda \) consists of \( r_i \) coordinates \( \zeta_{m-1}^i \) for some \( r_0,\ldots,r_{m-2} \in \NN \) with \( \sum_{i=0}^{m-2} r_i = n \).  It follows that \( \lambda = \sum_{i=0}^{m-2} r_i \zeta_{m-1}^i \), so we may conclude that
\[
S = \left \{ \left ( \br \cdot \hat{\zeta} \right )^{m-1} : \br \in \NN^{m-1}, \br \cdot \hat{1} = n \right \} \cup \{0\}.
\]
where \( \hat{\zeta} \) denotes the vector \( (1,\zeta_{m-1},\ldots,\zeta_{m-1}^{m-2}) \).  Although the above argument, which fully describes the {\em set} spectrum, gives no information about the multiplicities of the eigenvalues, it does yield a natural conjecture: the multiplicity of \( \xi = ( \br \cdot \hat{\zeta} )^{m-1} \) should be \( 1/(m-1) \) times the number of ways to choose \( s_0,\ldots,s_{m-2} \) so that
\[
\bs \cdot \hat{\zeta} = \xi^{1/(m-1)} \zeta_{m-1}^j
\]
for some \( j \in \{0,\ldots,m-2\} \).  Write \( \mu(\br) \) for this quantity.  In order to affirm our intuition, we apply the Poisson product formula.  To wit, let
\begin{eqnarray*}
\overline{F}_i = & F_i(x_1,x_2,\ldots,x_{n-1},0) \qquad & \text{for } i \in [n] \text{ and}\\
f_i = & F_i(x_1,x_2,\ldots,x_{n-1},1) \qquad & \text{for } i \in [n].
\end{eqnarray*}
Then, we define
\begin{equation} \label{eq6}
V = \VV(f_1,\ldots,f_{n-1}) \subseteq \KK^{n-1}
\end{equation}
the zero locus of the \( f_i \)'s, where \( \KK = \overline{\CC[\lambda]} \), the algebraic closure of \( \CC[\lambda] \).  Then the product formula states that
\begin{equation} \label{eq7}
\Res(F_1,\ldots,F_n) = \Res(\overline{F}_1,\ldots,\overline{F}_{n-1})^{\deg(F_n)} \prod_{\bx \in V} f_n(\bx)^{m(\bx)},
\end{equation}
where \( m(\bx) \) denotes the multiplicity of the point \( \bx \in V \).  As we shall see below, there are \( (m-1)^{n-1} \) distinct \( \bx \) in the product above -- the maximum possible number of such points by B\'{e}zout's Theorem applied to (\ref{eq6}), since \( \deg(f_i) = m-1 \) for all \( i \in [n-1] \).  Therefore, \( m(\bx) = 1 \) for all \( \bx \in V \).  Note also that the hypothesis of the product formula, that \( \psi = \Res(\overline{F}_1,\ldots,\overline{F}_{n-1}) \neq 0 \), holds here because \( \psi \) is a nonvanishing polynomial in \( \lambda \).

\begin{lemma} \label{lem:rootsofunity} Suppose \( p(x) = \alpha (x - \beta_1) \cdots (x - \beta_r) \) is a polynomial of degree \( r \) with lead coefficient \( \alpha \).  Then
\[
\prod_{j=0}^{r-1} p(\zeta_r^j x) = \alpha^r \prod_{j=0}^{r-1} (x^r - \beta_j^r)
\]
\end{lemma}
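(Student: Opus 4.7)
The plan is to expand the left-hand side using the factored form of $p$, interchange the order of the double product, and then evaluate the inner product for each fixed root $\beta_k$. Substituting the factorization gives
\begin{equation*}
\prod_{j=0}^{r-1} p(\zeta_r^j x) = \alpha^r \prod_{k=1}^{r} \prod_{j=0}^{r-1} (\zeta_r^j x - \beta_k),
\end{equation*}
so the task reduces to showing that the inner product equals $x^r - \beta_k^r$ (up to a sign that cancels in the outer product over $k$).

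The heart of the argument is the following observation: pulling $\zeta_r^j$ out of each factor gives $\zeta_r^j x - \beta_k = \zeta_r^j(x - \zeta_r^{-j}\beta_k)$, and the numbers $\{\zeta_r^{-j}\beta_k : 0 \le j \le r-1\}$ are exactly the $r$-th roots of $\beta_k^r$. Hence $\prod_{j=0}^{r-1}(x - \zeta_r^{-j}\beta_k) = x^r - \beta_k^r$. Combined with the evaluation $\prod_{j=0}^{r-1}\zeta_r^j = \zeta_r^{r(r-1)/2} = (-1)^{r-1}$, I get
\begin{equation*}
\prod_{j=0}^{r-1}(\zeta_r^j x - \beta_k) = (-1)^{r-1}(x^r - \beta_k^r),
\end{equation*}
and multiplying over $k = 1,\ldots,r$ contributes an overall sign $((-1)^{r-1})^r = (-1)^{r(r-1)} = 1$, yielding the claimed identity.

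The only step requiring care is the sign bookkeeping, but the parity $r(r-1)$ is always even, so the sign disappears cleanly. As a sanity check (and a more conceptual alternative I could use in place of the sign computation), I would note that both sides are polynomials in $x$ of degree $r^2$ with leading coefficient $\alpha^r$ — since $p(\zeta_r^j x) = \alpha \zeta_r^{jr} x^r + \cdots = \alpha x^r + \cdots$ — and both vanish on the same multiset $\{\zeta_r^{-j}\beta_k : 0 \le j < r,\ 1 \le k \le r\}$, so they agree as polynomials without any sign analysis.
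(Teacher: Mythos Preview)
Your proof is correct and follows essentially the same approach as the paper: expand using the factorization, pull out the $\zeta_r^j$ factors, and recognize $\prod_j (x - \zeta_r^{-j}\beta_k) = x^r - \beta_k^r$. The only tactical difference is that the paper factors out $\zeta_r^j$ from all $r$ linear factors simultaneously (for fixed $j$), obtaining $\zeta_r^{jr} = 1$ directly and thereby sidestepping the sign bookkeeping you carry through; your handling of that sign via $((-1)^{r-1})^r = 1$ is correct, and your alternative leading-coefficient-plus-roots argument is a nice sanity check.
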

\begin{proof}
We can write
\begin{align*}
\prod_{j=0}^{r-1} p(\zeta_r^j x) &= \prod_{j=0}^{r-1} [\alpha (\zeta_r^j x - \beta_1) \cdots (\zeta_r^j x - \beta_r)] \\
&= \alpha^r \prod_{j=0}^{r-1} \zeta_r^{jr} (x - \zeta_r^{-j} \beta_1) \cdots (x - \zeta_r^{-j} \beta_r)] \\
&= \alpha^r (x^r - \beta_1^r) \cdots (x^r - \beta_r^r).
\end{align*}
\end{proof}

\begin{theorem} \label{thm:charpolyJJ} Let \( \phi_n(\lambda) \) denote the characteristic polynomial of \( \JJ_n^m \), \( n \geq 2 \).  Then
\[
\phi_n(\lambda) = \lambda^{(n-1) (m-1)^{n-1}} \prod_{\substack{\br \in \NN^{m-1} \\ \br \cdot \hat{1} = n}} \left ( \lambda - (\br \cdot \hat{\zeta})^{m-1} \right )^{\mu(\br)/(m-1)}.
\]
\end{theorem}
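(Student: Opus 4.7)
The plan is induction on $n$, with the Poisson product formula (\ref{eq7}) and Lemma~\ref{lem:rootsofunity} as the main tools. The crucial initial observation is that setting $x_n = 0$ in $F_i$ for $i \in [n-1]$ recovers exactly the defining polynomial system for $\JJ_{n-1}^m$, so $\Res(\overline{F}_1,\ldots,\overline{F}_{n-1}) = \phi_{n-1}(\lambda)$, and (\ref{eq7}) reduces to the recursion
\[
\phi_n(\lambda) = \phi_{n-1}(\lambda)^{m-1}\prod_{\bx \in V} f_n(\bx).
\]
The task thus becomes to evaluate $P := \prod_{\bx \in V} f_n(\bx)$ and combine with the inductive hypothesis for $\phi_{n-1}$.

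To parameterize $V$, repeat the root-of-unity argument used for the set spectrum: the conditions $f_i(\bx) = 0$ force $x_i^{m-1}$ to be a common value, so (writing $\omega = \zeta_{m-1}$ and $t = \lambda^{1/(m-1)}$) each solution has $x_i = \omega^{a_i}/(t - c)$ with $c = \sum_i \omega^{a_i}$ and $(a_1,\ldots,a_{n-1}) \in \{0,\ldots,m-2\}^{n-1}$. This yields exactly $(m-1)^{n-1}$ distinct points, saturating B\'ezout's bound and forcing $m(\bx) = 1$ throughout. Using $1 + \sum_j x_j = t/(t-c)$ and factoring $(t-c)^{m-1} - 1 = \prod_{j=0}^{m-2}(t - c - \omega^j)$, together with the key observation $c + \omega^j = (\br + \be_j)\cdot\hat{\zeta}$ (where $\br$ is the multiset type of $(a_1,\ldots,a_{n-1})$), gives
\[
f_n(\bx) = \frac{\lambda\,\prod_{j=0}^{m-2}(t - c - \omega^j)}{(t - c)^{m-1}}.
\]

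Collecting the product over $V$ and grouping tuples by multiset type $\br$ with multinomial weight $\binom{n-1}{\br}$, along with the Pascal-style identity $\sum_j \binom{n-1}{\br - \be_j} = \binom{n}{\br}$, allows $P$ to be rewritten as $P = \lambda^{(m-1)^{n-1}}\,p_n(t)/p_{n-1}(t)^{m-1}$, where $p_k(t) := \prod_{\br \cdot \hat{1} = k}(t - \br \cdot \hat{\zeta})^{\binom{k}{\br}}$. Lemma~\ref{lem:rootsofunity}, applied with $r = m-1$, then converts these $t$-expressions into polynomials in $\lambda = t^{m-1}$: grouping the roots $\br \cdot \hat{\zeta}$ of $p_k$ into orbits under multiplication by $\omega$ (corresponding to cyclic shifts of $\br$), each generic orbit of size $m-1$ collapses to a factor $(\lambda - (\br\cdot\hat{\zeta})^{m-1})^{\binom{k}{\br}}$, while degenerate orbits (those for which $\br\cdot\hat{\zeta} = 0$) contribute only pure powers of $t$.

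Substituting back into $\phi_n = \phi_{n-1}^{m-1} P$ and applying the inductive hypothesis to $\phi_{n-1}^{m-1}$, the $\lambda$-terms align to produce the theorem's prefactor $\lambda^{(n-1)(m-1)^{n-1}}$, and the remaining product reassembles as $\prod_{\br \cdot \hat{1} = n}(\lambda - (\br \cdot \hat{\zeta})^{m-1})^{\mu(\br)/(m-1)}$ after regrouping $\br$'s according to the orbit of $\br\cdot\hat{\zeta}$---precisely the grouping encoded by the definition of $\mu(\br)$. The principal obstacle is the bookkeeping around the degenerate orbits ($c(\br) = 0$): the $t$-powers they generate must cancel appropriately between $p_n$ and $p_{n-1}^{m-1}$ and combine with the $\lambda^{(m-1)^{n-1}}$ prefactor and the inductive $\lambda$-prefactor from $\phi_{n-1}^{m-1}$, all to yield a bona fide polynomial in $\lambda$ with the claimed integer-valued multiplicities.
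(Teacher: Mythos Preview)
Your approach is correct and essentially the same as the paper's: both induct on $n$ via the Poisson formula, recognize $\Res(\overline{F}_1,\ldots,\overline{F}_{n-1})=\phi_{n-1}$, parameterize $V$ by $(m-1)^{n-1}$ root-of-unity tuples to saturate B\'ezout's bound, and use Lemma~\ref{lem:rootsofunity} together with the key identity $(\br\cdot\hat{\zeta})+\zeta_{m-1}^j=(\br+\be_j)\cdot\hat{\zeta}$ to collapse products in $t=\lambda^{1/(m-1)}$ into polynomials in $\lambda$. The only difference is organizational: you aggregate the whole product first into $\lambda^{(m-1)^{n-1}}p_n(t)/p_{n-1}(t)^{m-1}$ via the multinomial Pascal identity and then invoke $\omega$-invariance, whereas the paper first computes the orbit product $\prod_j f_n(\zeta_{m-1}^j\bx)$ for a single $\bx$ (applying Lemma~\ref{lem:rootsofunity} to auxiliary polynomials $g$ and $h$) and introduces a formal $(m-1)$-st root $\tilde{f}_n$ before taking the product over $V$.
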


\begin{proof} We proceed by induction on \( n \) using (\ref{eq7}).  The base case \( n=1 \) is trivial.  For the inductive step, we write
\begin{align*}
\phi_n(\lambda) &= \Res(F_1,\ldots,F_n) \\
&= \Res(\overline{F}_1,\ldots,\overline{F}_n)^{m-1} \prod_{\bx \in V} f_n(\bx)^{m(\bx)} \\
&= \phi_{n-1}(\lambda)^{m-1} \prod_{\bx \in V} f_n(\bx)^{m(\bx)}.
\end{align*}
By the inductive hypothesis,
\[
\phi_{n-1}(\lambda) = \lambda^{(n-2) (m-1)^{n-2}} \prod_{\br \cdot \hat{1} = n-1} \left ( \lambda - (\br \cdot \hat{\zeta})^{m-1} \right )^{\mu(\br)/(m-1)}.
\]
Hence,
\begin{equation} \label{eq8}
\phi_{n}(\lambda) = \lambda^{(n-2) (m-1)^{n-1}} \prod_{\br \cdot \hat{1} = n-1} \left ( \lambda - (\br \cdot \hat{\zeta})^{m-1} \right )^{\mu(\br)} \prod_{\bx \in V} f_n(\bx)^{m(\bx)}.
\end{equation}
Note that
\[
V = \left \{ \bx \in \KK^{n-1} : \forall i \in n \left (\lambda x_i^{m-1} - \left (1 + \sum_{j=1}^{n-1} x_j \right )^{m-1} = 0 \right ) \right \}.
\]
We claim that \( x_i \neq 0 \) for any point \( \bx \in V \).  Indeed, suppose \( x_k = 0 \).  Then
\[
0 = \lambda x_k^{m-1} = \left ( 1 + \sum_{j=1}^{n-1} x_j \right )^{m-1}.
\]
Since this implies that, for any \( j \in [n-1] \),
\[
\lambda x_j^{m-1} = 0,
\]
it must be the case that \( x_j = 0 \) for all \( j \in [n-1] \), since \( \lambda \) is a nonzero element of \( \KK \).  Thus,
\[
0 = \left ( 1 + \sum_{j=1}^{n-1} x_j \right )^{m-1} = (1 + 0)^{m-1} = 1,
\]
we have a contradiction.  Therefore, \( x_i \neq 0 \) for any \( \bx \in V \).

For any \( i \in [n-1] \), we may write
\[
x_i^{m-1} = \frac{\left (1 + \sum_{j=1}^{n-1} x_j \right )^{m-1}}{\lambda}.
\]
Since \( x_i \) is nonzero, it must be the case that \( x_i = \zeta_{m-1}^{j_i} a \) for some \( j_i \in \{0,\ldots,m-2\} \) and each \( i \in [n-1] \), where
\[
a = \frac{1 + \sum_{j=1}^{n-1} x_j}{\lambda^{1/{m-1}}}
\]
(for any choice of the root).  Suppose that \( r_0 \) of the \( x_i \) are \( a \), \( r_1 \) of the \( x_i \) are \( \zeta_{m-1} a \), \( r_2 \) of the \( x_i \) are \( \zeta_{m-1}^2 a \), etc., where \( \br \cdot \hat{1} = n-1 \).  Then
\[
a = \frac{1 + a (\br \cdot \hat{\zeta})}{\lambda^{1/(m-1)}},
\]
so that \( a = (\lambda^{1/(m-1)} - (\br \cdot \hat{\zeta}))^{-1} \).  Since then \( a \) is uniquely determined by the choice of \( \br \), this yields exactly \( (m-1)^{n-1} \) distinct points \( \bx \in V \), one for each function \( \nu : [n-1] \rightarrow \{0,\ldots,m-2\} \), where \( \nu(j) \) represents the exponent \( j_i \) of \( \zeta_{m-1} \) chosen for the \( j \)-th coordinate of \( \bx \).  (Thus, the statement above concerning the multiplicities of the \( \bx \in V \) is verified.)

If \( \bx \in V \), then \( \zeta_{m-1}^j \bx \) is also a point of \( V \) for each \( j \in [m-2] \).  Note that
\begin{align*}
f_n(\bx) &= \lambda - \left ( 1 + \frac{\br \cdot \hat{\zeta}}{\lambda^{1/(m-1)} - (\br \cdot \hat{\zeta})} \right )^{m-1} \\
&= \lambda - \left ( \frac{\lambda^{1/(m-1)}}{\lambda^{1/(m-1)} - (\br \cdot \hat{\zeta})} \right )^{m-1} \\
&= \lambda \left [ 1 - \left ( \lambda^{1/(m-1)} - (\br \cdot \hat{\zeta}) \right )^{1-m} \right ] \\
&= \lambda \frac{ \left ( \lambda^{1/(m-1)} - (\br \cdot \hat{\zeta}) \right )^{m-1} - 1 }{\left ( \lambda^{1/(m-1)} - (\br \cdot \hat{\zeta}) \right )^{m-1}}.
\end{align*}
Then,
\begin{align*}
\prod_{j=0}^{m-2} f_n(\zeta_{m-1}^j \bx) &= \lambda^{m-1} \frac{ \prod_{j=0}^{m-2} \left [ \left ( \lambda^{1/(m-1)} - \zeta_{m-1}^j (\br \cdot \hat{\zeta}) \right )^{m-1} - 1 \right ]}{\prod_{j=0}^{m-2} \left ( \lambda^{1/(m-1)} - \zeta_{m-1}^j (\br \cdot \hat{\zeta}) \right )^{m-1}} \\
&= \lambda^{m-1} \frac{ \prod_{j=0}^{m-2} g(\zeta_{m-1}^j (\br \cdot \hat{\zeta}))}{ \prod_{j=0}^{m-2} h(\zeta_{m-1}^j (\br \cdot \hat{\zeta}))},
\end{align*}
where \( g(x) = \left ( \lambda^{1/(m-1)} - x \right )^{m-1} - 1 \) and \( h(x) = \left ( \lambda^{1/(m-1)} - x \right )^{m-1} \). All \( m-1 \) roots of \( h(x) \) are \( \lambda^{1/(m-1)} \), and its lead coefficient is \( (-1)^{m-1} \).  Therefore, by Lemma \ref{lem:rootsofunity},
\begin{align*}
\prod_{j=0}^{m-1} h(\zeta_{m-1}^j (\br \cdot \hat{\zeta})) &= (-1)^{(m-1)^2} \prod_{j=0}^{m-1} ((\br \cdot \hat{\zeta})^{m-1} - \lambda) \\
&= (-1)^{(m-1)} ((\br \cdot \hat{\zeta})^{m-1} - \lambda)^{m-1} \\
&= (\lambda - (\br \cdot \hat{\zeta})^{m-1})^{m-1}.
\end{align*}
Define \( k(y) = \lambda - (x+y)^{m-1} \).  Then \( k(y) = 0 \) implies \( y = \lambda^{1/(m-1)} \zeta_{m-1}^j - x \) for some \( 0 \leq j \leq m-2 \), so Lemma \ref{lem:rootsofunity} yields
\begin{align*}
\prod_{j=0}^{m-2} k(\zeta_{m-1}^j y) &= (-1)^{m-1} \prod_{j=0}^{m-2} (y^{m-1} - (\lambda^{1/(m-1)} \zeta_{m-1}^j - x)^{m-1}) \\
&= \prod_{j=0}^{m-2} ((\lambda^{1/(m-1)} - \zeta_{m-1}^{-j} x)^{m-1} \zeta_{m-1}^{j(m-1)}  - y^{m-1}) \\
&= \prod_{j=0}^{m-2} ((\lambda^{1/(m-1)} - \zeta_{m-1}^{-j} x)^{m-1} - y^{m-1}).
\end{align*}
Hence,
\begin{align*}
\prod_{j=0}^{m-2} g(\zeta_{m-1}^j x) &= \prod_{j=0}^{m-2} ((\lambda^{1/(m-1)} - \zeta_{m-1}^{j} x)^{m-1} - 1) \\
&= \prod_{j=0}^{m-2} ((\lambda^{1/(m-1)} - \zeta_{m-1}^{-j} x)^{m-1} - 1) \\
&= \prod_{j=0}^{m-2} k(\zeta_{m-1}^j \cdot 1) \\
&= \prod_{j=0}^{m-2} (\lambda - (x+\zeta_{m-1}^j)^{m-1}).
\end{align*}
and we may conclude that
\[
\prod_{j=0}^{m-2} f_n(\zeta_{m-1}^j \bx) = \lambda^{m-1} \frac{ \prod_{j=0}^{m-2} (\lambda - ((\br \cdot \hat{\zeta})+\zeta_{m-1}^j)^{m-1})}{(\lambda - (\br \cdot \hat{\zeta})^{m-1})^{m-1}}.
\]
Let us define, then, the function
\[
\tilde{f}_n(\bx) = \frac{\lambda \left ( \prod_{j=0}^{m-2} (\lambda - ((\br \cdot \hat{\zeta})+\zeta_{m-1}^j)^{m-1}) \right )^{1/(m-1)}}{\lambda - (\br \cdot \hat{\zeta})^{m-1}}.
\]
By the above computation, we may write
\begin{align*}
\prod_{\bx \in V} f_n(\bx) &= \prod_{\bx \in V} \tilde{f}_n(\bx) \\
&= \prod_{\br \cdot \hat{1} = n-1} \left ( \frac{\lambda \left ( \prod_{j=0}^{m-2} (\lambda - ((\br \cdot \hat{\zeta})+\zeta_{m-1}^j)^{m-1}) \right )^{1/(m-1)}}{\lambda - (\br \cdot \hat{\zeta})^{m-1}} \right )^{\mu(\br)} \\
&= \frac{ \lambda^{(m-1)^{n-1}} }{\prod_{\br \cdot \hat{1} = n-1} ( \lambda - (\br \cdot \hat{\zeta})^{m-1})^{\mu(\br)}}  \\
& \qquad \cdot \prod_{\br \cdot \hat{1} = n-1} \left ( \prod_{j=0}^{m-2} (\lambda - ((\br \cdot \hat{\zeta})+\zeta_{m-1}^j)^{m-1}) \right )^{\mu(\br)/(m-1)}.
\end{align*}
The total multiplicity of the term \( \lambda - ((\br \cdot \hat{\zeta})+\zeta_{m-1}^j) \) in the right-hand product is \( \mu(\br^\prime)/(m-1) \), where \( \br^\prime = \br + \be_j \), \( \be_j \) the elementary basis vector with nonzero coordinate \( j \).  Therefore,
\begin{align*}
\prod_{\bx \in V} f_n(\bx) &= \frac{ \lambda^{(m-1)^{n-1}} }{\prod_{\br \cdot \hat{1} = n-1} ( \lambda - (\br \cdot \hat{\zeta})^{m-1})^{\mu(\br)}}  \\
& \qquad \cdot \prod_{\br^\prime \cdot \hat{1} = n} (\lambda - (\br^\prime \cdot \hat{\zeta}))^{m-1})^{\mu(\br^\prime)/(m-1)}.
\end{align*}
Substituting this expression into (\ref{eq7}) yields
\[
\phi_{n}(\lambda) = \lambda^{(n-1) (m-1)^{n-1}} \prod_{\br \cdot \hat{1} = n} (\lambda - (\br \cdot \hat{\zeta})^{m-1})^{\mu(\br)/(m-1)},
\]
which is the desired conclusion.
\end{proof}

Our next result illustrates the geometry of the eigenvalues of \(\JJ_n^m\) as a point-set in the complex plane.  Let \( \nu_n \) denote the probability measure on \( \CC \) defined by choosing an element of the spectrum of \( \JJ_n^m \) (with multiplicity) uniformly at random.  This distribution is straightforward to describe exactly in several cases, and to describe up to total variation in all other cases.

\begin{theorem} Let \( \binom{n}{a_1,\ldots,a_k} \) denote the multinomial coefficient, which we define to be zero unless \( n = \sum_{j=1}^k a_j \) and \( a_j \in \NN \) for all \( j \in [k] \), in which case it equals \( n! / (a_1! \cdots a_k!)  \).  The distribution \( \nu_n \) is everywhere zero except as follows:
\begin{enumerate}
\item When \( m=2 \), \( \nu_n(0) = (n-1)/n \) and \( \nu_n(n) = 1/n \).
\item When \( m=3 \), \(\nu_n(k) = 2^{-n+1} \binom{n}{n + \sqrt{k}}/n\) for \(k \neq 0\) and \(\nu_n(0) = (n-1)/n + 2^{-n} \binom{n}{n/2}/n\).
\item When \( m=4 \),
    \[
    \nu_n\left ( \frac{x + y \sqrt{-3}}{2} \right ) = \frac{3^{-n+1}}{n} \binom{n}{\frac{n+a}{3}, \frac{2n-a+3b}{6}, \frac{2n-a-3b}{6}}
    \]
    for \((x,y) \in \ZZ^2 \setminus \{(0,0)\}\) if there exist \(a,b \in \ZZ\) so that \(4x = a(a^2 - 9b^2)\) and \(4y = 3b(a^2-b^2)\), and
    \[
    \nu_n(0) = \frac{n-1}{n} + \frac{3^{-n}}{n} \binom{n}{n/3, n/3, n/3}.
    \]
\item When \( m=5 \),
    \[
    \nu_n(x+yi) = \frac{4^{-n+1}}{n} \binom{n}{(n+a+b)/2} \binom{n}{(n-a+b)/2}
    \]
    for \((x,y) \in \ZZ^2 \setminus \{(0,0)\}\), if there exist \(a,b \in \ZZ\) so that \(x = a^4 - 6a^2b^2 + b^4\) and \(y = 4 ab (a^2 - b^2)\), and
    \[
    \nu_n(0) = \frac{n-1}{n} + \frac{4^{-n}}{n} \binom{n}{n/2}^2.
    \]
\item For any \(m \geq 4\),
    \[
    \sqrt{n} \nu_n \circ \psi_n - \frac{n-1}{\sqrt{n}} \delta_0 \overset{D}{\longrightarrow} \cN(0,1) \circ \rho_{m-1},
    \]
    where \( \cN(t,\sigma) \) is a standard normal distribution on \( \CC \) with mean \( t \) and standard deviation \( \sigma \), \(\psi_n(\xi) = \xi/\sqrt{n}\) for \(\xi \in \CC\), \(\delta_0\) is the probability distribution consisting of a single atom at \(0\), and \(\rho_r\) is the multivalued map which sends \(\xi \in \CC\) to all of its \(r\)-th roots, \(r \in \NN^+\).
\end{enumerate}
\end{theorem}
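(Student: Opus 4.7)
\emph{Plan.} The starting point is a probabilistic reinterpretation of Theorem \ref{thm:charpolyJJ}. The recurrence implicit in the inductive step, combined with Pascal's identity for multinomials, shows $\mu(\br) = \binom{n}{r_0, \ldots, r_{m-2}}$. Since $\sum_{\br \cdot \hat{1} = n} \binom{n}{\br} = (m-1)^n$ is the number of sequences $(a_1, \ldots, a_n) \in \{0, \ldots, m-2\}^n$, setting $Z = \sum_{i=1}^n \zeta_{m-1}^{a_i}$ we find that $\nu_n$ decomposes as
\[
\nu_n = \frac{n-1}{n}\delta_0 + \frac{1}{n}\operatorname{law}(Z^{m-1}),
\]
with the $\delta_0$ contribution tracking the $\lambda^{(n-1)(m-1)^{n-1}}$ prefactor in $\phi_n(\lambda)$. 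All five parts of the theorem reduce to analyzing the law of $Z$.

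Cases (1) and (2) are immediate: $Z \equiv n$ when $m = 2$, and $Z$ is a shifted binomial when $m = 3$, with $Z^2$ folding $\pm k$ onto $k^2$. For case (3), writing the sequence counts as $(k_0, k_1, k_2)$ and using $\zeta_3 = (-1 + \sqrt{-3})/2$ gives $Z = (a + b\sqrt{-3})/2$ with $a = 2k_0 - k_1 - k_2$ and $b = k_1 - k_2$; expanding $Z^3$ yields the identities $4x = a(a^2 - 9b^2)$ and $4y = 3b(a^2 - b^2)$. The rotational symmetry $\zeta_3 Z \overset{d}{=} Z$ cyclically permutes $(k_0, k_1, k_2)$, so each of the three cube roots of $\xi \neq 0$ contributes an equal multinomial, converting $3^{-n}$ to the stated $3^{-n+1}$ prefactor. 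For case (4), the key trick is that the dependent pair $(\operatorname{Re} X_i, \operatorname{Im} X_i) \in \{(1,0),(0,1),(-1,0),(0,-1)\}$ admits the change of basis $(U_i + V_i, U_i - V_i)$, which lands uniformly on $\{\pm 1\}^2$; hence $A + B$ and $A - B$ (where $Z = A + Bi$) are independent Rademacher sums, $\Pr[Z = a + bi]$ factors as a product of two binomials, and expanding $(a + bi)^4$ produces the claimed $(x, y)$. The four fourth-roots of $\xi$ once again contribute equally by symmetry, yielding $4^{-n+1}$.

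For case (5), the multivariate central limit theorem supplies the limit. A short trigonometric computation gives $\EE[X_i] = 0$ for $m \geq 3$ and $\EE[X_i^2] = (m-1)^{-1} \sum_{j=0}^{m-2} \zeta_{m-1}^{2j} = 0$ for $m \geq 4$, so $\operatorname{Re} X_i$ and $\operatorname{Im} X_i$ are uncorrelated with variance $1/2$ each. Hence $Z/\sqrt{n} \overset{D}{\longrightarrow} W \sim \cN(0, 1)$ as a standard complex Gaussian. The scaled, $\delta_0$-corrected left-hand side of (5) records the empirical distribution of $Z/\sqrt{n}$, because the $(m-1)$-th roots of each eigenvalue $Z^{m-1}$ are the rotates $\zeta_{m-1}^j Z$ (each appearing equally often by the rotational symmetry of the $a_i$'s); reading this through $\rho_{m-1}$ gives the stated limit.

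The principal obstacle lies in cases (3) and (4): for each admissible eigenvalue $\xi$ one must verify that exactly one representative $(a, b) \in \ZZ^2$ is selected modulo the $(m-1)$-fold root symmetry, and that the divisibility constraints $3 \mid (n+a)$, $6 \mid 2n - a \pm 3b$ (and their $m = 5$ analogues) match precisely the realizable $\xi$'s with integer multinomial arguments. Part (5) additionally requires care with the interpretation of convergence under the multivalued $\rho_{m-1}$ and with the excision of the growing atom at $0$, whose mass $(n-1)/\sqrt{n}$ diverges but is exactly canceled in the stated limit.
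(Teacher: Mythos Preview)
Your proposal is correct and follows essentially the same route as the paper: both translate Theorem~\ref{thm:charpolyJJ} into the decomposition \(\nu_n = \tfrac{n-1}{n}\delta_0 + \tfrac{1}{n}\operatorname{law}(Z^{m-1})\) with \(Z\) a length-\(n\) random walk on the \((m-1)\)-st roots of unity, then handle (1)--(4) by lattice counting and (5) by the covariance computation \(\cov = \tfrac{1}{2}I\) plus the multivariate CLT. Your presentational choices are in places slightly slicker---the \((U_i,V_i)\) decoupling for \(m=5\) where the paper simply cites Doyle--Snell, and the single complex moment \(\EE[X_i^2]=0\) in lieu of three trig identities---but the architecture is identical.
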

\begin{proof}  Note that there are \(n (m-1)^{n-1}\) eigenvalues of \(\JJ_n^m\), counted with (algebraic) multiplicity.  In the statement of Theorem \ref{thm:charpolyJJ}, the exponent of \(\lambda\) to the left of the product is \((n-1)(m-1)^{n-1}\); this accounts for a \( (n-1)(m-1)^{n-1} / (n (m-1)^{n-1}) = (n-1)/n\) fraction of the roots of the characteristic polynomial.  It follows that for, \(\xi \neq 0\), \(\nu(\xi)\) is \(1/n\) times the probability that a simple random walk whose steps are the \((m-1)\)-st roots of unity ends at some \(m-1\)-st root of \(\xi\) after \(n\) steps, and, for \(\xi = 0\), \(\nu(\xi)\) is this same quantity, plus \((n-1)/n\).
\begin{enumerate}
\item For \(m=2\), there is only one \((m-1)\)-st root of unity: \(1\).  Therefore, a simple random walk of length \(n\) with this as its only choice of step will end at \(n\) with probability \(1\).  It follows that \(\nu_n(n) = 1/n\) and \(\nu_n(0) = (n-1)/n\).
\item For \(m=3\), there are two \((m-1)\)-st roots of unity: \(1\) and \(-1\).  Therefore, a walk of length \(n\) with \(\pm 1\) as its steps will end at \(k\) if and only if it consists of \(x\) steps in the direction of \(1\) and \(y\) steps in the direction of \(-1\), where \(x-y = k\) and \(x+y = n\).  Solving for \(x\) and \(y\), one obtains \(x = (n+k)/2\) and \(y = (n-k)/2\).  The number of such walks is \(\binom{n}{(n+k)/2}\), so that the probability of ending at \(k\) is \(2^{-n} \binom{n}{(n+k)/2}\).  Since the only elements of \(\CC\) both of whose square roots are elements of \(\ZZ\) are themselves elements of \(\ZZ\), the probability that a random walk with steps \(\pm 1\) ends at an \((m-1)\)-st root of \(k \in \ZZ \setminus \{0\}\) is
    \[
    2^{-n} \left ( \binom{n}{(n+\sqrt{k})/2} + \binom{n}{(n-\sqrt{k})/2} \right ) = 2^{-n+1} \binom{n}{(n + \sqrt{k})/2}.
    \]
    and \(2^{-n} \binom{n}{n/2}\) if \(k = 0\).  Therefore, \(\nu_n(k) = 2^{-n+1} \binom{n}{n + \sqrt{k}}/n\) for \(k \neq 0\) and \(\nu_n(0) = (n-1)/n + 2^{-n} \binom{n}{n/2}/n\).
\item For \(m=4\), there are three \((m-1)\)-st roots of unity: \(S = \{1, \zeta_3, \zeta_3^2\}\), where \(\zeta = \exp(2 \pi i / 3) = (-1+i\sqrt{3})/2 \).  Therefore, a walk of length \(n\) with \(S\) as its steps will end at \(a/2 + b \sqrt{-3}/2\), \(a,b \in \ZZ\) with \(a \equiv n \pmod{2}\), if and only if it consists of \(\alpha\) steps in the direction of \(1\), \(\beta\) steps in the direction of \(\zeta\), and \(\gamma\) steps in the direction of \(\zeta^2\), where \(\alpha,\beta,\gamma \in \NN\) and
    \begin{eqnarray*}
    \alpha + \beta + \gamma & = & n \\
    \alpha - \beta/2 - \gamma/2 & = & a/2 \\
    \beta \sqrt{3}/2 - \gamma \sqrt{3}/2 & = & b \sqrt{3}/2.
    \end{eqnarray*}
    Solving for \(\alpha\), \(\beta\), and \(\gamma\) yields:
    \begin{eqnarray*}
    \alpha & = & \frac{1}{3} a + \frac{1}{3} n \\
    \beta & = & -\frac{1}{6} a + \frac{1}{2} b + \frac{1}{3} n \\
    \gamma & = & -\frac{1}{6} a - \frac{1}{2} b + \frac{1}{3} n.
    \end{eqnarray*}
    Therefore, the number of walks on the lattice generated by \(S\) that end at \(a/2 + b \sqrt{-3}/2\) is given by
    \[
    F(n,a,b) = \binom{n}{\frac{n+a}{3}, \frac{2n-a+3b}{6}, \frac{2n-a-3b}{6}}.
    \]
    Note that
    \[
    \left ( \frac{a}{2} + \frac{b \sqrt{-3}}{2} \right )^3 = \frac{a(a^2 - 9 b^2)}{8} + \frac{3\sqrt{-3}b(a^2 - b^2)}{8}.
    \]
    Since \(a \equiv b \pmod{2}\) implies that \(4 | a^2 - b^2\) and \(4 | a^2 - 9b^2\), the right-hand side is an element of \(\frac{1}{2} \ZZ[\sqrt{-3}]\).  We may conclude that the probability that a simple random walk whose steps are the cube roots of unity ends after \(n\) steps at a cube root of \(x/2 + y\sqrt{-3}/2 \neq 0\), \(x,y \in \ZZ\), is \(3 \cdot 3^{-n} F(n,a,b)\) if there exist \(a,b \in \ZZ\) so that \(4x = a(a^2 - 9b^2)\) and \(4y = 3b(a^2-b^2)\), and \(0\) otherwise.  Therefore, for \((x, y) \neq (0,0)\),
    \[
    \nu_n\left ( \frac{x + y \sqrt{-3}}{2} \right ) = \frac{3^{-n+1}}{n} \binom{n}{\frac{n+a}{3}, \frac{2n-a+3b}{6}, \frac{2n-a-3b}{6}}
    \]
    if there exist \(a,b \in \ZZ\) so that \(4x = a(a^2 - 9b^2)\) and \(4y = 3b(a^2-b^2)\), and
    \[
    \nu_n(0) = \frac{n-1}{n} + \frac{3^{-n}}{n} \binom{n}{n/3, n/3, n/3}.
    \]
\item For \(m=5\), there are four \((m-1)\)-st roots of unity: \(S = \{\pm 1, \pm i\}\).  A simple random walk of length \(n\) with \(S\) as its steps can be recast as a walk on \(\ZZ^2\) using the standard identification \(\CC \equiv \RR^2\); it is well-known (see, for example, \cite{DoyleSnell84}), that such a walk will end at \(a + bi\), \(a,b \in \ZZ\), with probability
    \[
    4^{-n} \binom{n}{(n+a+b)/2} \binom{n}{(n-a+b)/2}.
    \]
    Since \((a+bi)^4 = (a^4 - 6a^2b^2 + b^4) + 4 ab (a^2 - b^2) i \in \ZZ[i]\), we may conclude that, for \((x,y) \in \ZZ^2 \setminus \{(0,0)\}\),
    \[
    \nu_n(x+yi) = \frac{4^{-n+1}}{n} \binom{n}{(n+a+b)/2} \binom{n}{(n-a+b)/2}
    \]
    if there exist \(a,b \in \ZZ\) so that \(x = a^4 - 6a^2b^2 + b^4\) and \(y = 4 ab (a^2 - b^2)\), and
    \[
    \nu_n(0) = \frac{n-1}{n} + \frac{4^{-n}}{n} \binom{n}{n/2}^2.
    \]
\item Let \(X_r\) denote a length \(n\) simple random walk on \(\CC\) whose steps are the \(r\)-th roots of unity, \(r \geq 1\), and let \(Y_r = (\Re(X_r),\Im(X_r))\).  The covariance matrix of \(Y_r\) is given by
    \begin{align*}
    \cov(Y_r) &= \EE [\|Y_r\|_2^2] \\
    &= \left [ \begin{array}{cc} \EE [\Re(X_r)^2] & \EE[\Re(X_r)\Im(X_r)] \\ \EE[\Re(X_r)\Im(X_r)] & \EE[\Im(X_r)^2] \end{array} \right ].
    \end{align*}
    Let \(\zeta_r = \exp(2 \pi i/r)\).  Note that
    \[
    \sum_{j=0}^{r-1} \zeta_r^{2j} = \frac{\zeta_r^{2r}-1}{\zeta_r^2 - 1} = 0
    \]
    as long as the denominator is not zero, i.e., \(r (= m-1) \geq 3\).  Taking the real and imaginary parts of this equation yields
    \[
    \sum_{j=0}^{r-1} \cos(4 \pi i j / r) = \sum_{j=0}^{r-1} \sin(4 \pi i j / r) = 0.
    \]
    Therefore,
    \begin{align*}
    \EE [\Re(X_r)^2] &= \frac{1}{r} \sum_{j=0}^{r-1} \cos^2(2 \pi ij/r) \\
    &= \frac{1}{2r} \sum_{j=0}^{r-1} \cos(4 \pi ij/r) + 1\\
    &= 1/2.
    \end{align*}
    It also follows that
    \begin{align*}
    \EE [\Im(X_r)^2] &= \frac{1}{r} \sum_{j=0}^{r-1} \sin^2(2 \pi ij/r) \\
    &= \frac{1}{2r} \sum_{j=0}^{r-1} 1 - \cos(4 \pi ij/r) \\
    &= 1/2.
    \end{align*}
    Furthermore,
    \begin{align*}
    \EE [\Re(X_r)\Im(X_r)] &= \frac{1}{r} \sum_{j=0}^{r-1} \sin(2 \pi ij/r) \cos(2 \pi ij/r) \\
    &= \frac{1}{2r} \sum_{j=0}^{r-1} 2 \sin(4 \pi ij/r) \\
    &= 0.
    \end{align*}
    Therefore, \(\cov(Y_r) = \frac{1}{2} I\), and, by the Multidimensional Central Limit Theorem (see, e.g., \cite{vanderVaart98}),
    \[
    \frac{X_r}{\sqrt{n}} \overset{D}{\longrightarrow} \cN(0,1).
    \]
    It follows that
    \[
    \sqrt{n} \nu_n \circ \psi_n - \frac{n-1}{\sqrt{n}} \delta_0 \overset{D}{\longrightarrow} \cN(0,1) \circ \rho_{m-1}.
    \]
\end{enumerate}
\end{proof}

When \(m = 7\), it does not appear possible to write down a closed-form expression for \(\nu_n(\xi)\) (\cite{Zeilberger13}).  For other \(m\), however, we conjecture the following statement that strengthens the conclusion of part (5) above.

\begin{conj} For \( m = 6 \) or \( m > 7 \), \(\epsilon > 0\), and any measurable set \(E \subset \CC\),
    \[
    \lim_{n \rightarrow \infty} n^{-\epsilon} \left | \int_E n \nu_n \circ \psi_n - (n-1) \delta_0 - \sqrt{n} \cN(0,1) \circ \rho_{m-1} \right | = 0
    \]
    where \( \cN(t,\sigma) \) is a standard normal distribution on \( \CC \) with mean \( t \) and standard deviation \( \sigma \), \(\psi_n(\xi) = \xi/\sqrt{n}\) for \(\xi \in \CC\), \(\delta_0\) is the probability distribution consisting of a single atom at \(0\), and \(\rho_r\) is the multivalued map which sends \(\xi \in \CC\) to all of its \(r\)-th roots, \(r \in \NN^+\).
\end{conj}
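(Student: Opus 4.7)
The plan is to reinterpret the statement as a quantitative local central limit theorem. By Theorem~\ref{thm:charpolyJJ}, with $r = m - 1$, the non-zero spectrum of $\JJ_n^m$, counted with multiplicity, is precisely the distribution of $S_n^r$, where $S_n = \sum_{i=1}^n \omega_i$ is the random walk on $\CC$ with each $\omega_i$ uniform on the $r$-th roots of unity. The multiplicity $(n-1)(m-1)^{n-1}$ of the zero eigenvalue supplies the $(n-1)\delta_0$ subtraction in the conjecture, so the claim amounts to the following: after pushing forward under $z \mapsto z^r$, the law of $S_n/\sqrt{n}$ agrees with $\cN(0,1) \circ \rho_r$ on every measurable set, with error $o(n^{\epsilon - 1/2})$ in the unscaled picture.

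The natural approach is Fourier inversion. Write the step characteristic function as
$$\hat\mu(\xi) = \frac{1}{r}\sum_{j=0}^{r-1} e^{i\langle \xi, \zeta_r^j\rangle}, \qquad \xi \in \RR^2 \cong \CC,$$
so that $S_n$ has characteristic function $\hat\mu^n$. The expansion $\hat\mu(\xi) = 1 - \|\xi\|^2/(2r) + O(\|\xi\|^4)$ near the origin reproduces the Gaussian limit of part (5) of the previous theorem. The algebraic feature that distinguishes the permitted range $m = 6$ or $m > 7$ from the excluded cases is the following: $|\hat\mu(\xi)| = 1$ holds precisely when $\xi$ lies in the Pontryagin dual of the additive subgroup $\Lambda_r = \langle \zeta_r^0,\ldots,\zeta_r^{r-1}\rangle_{\ZZ} \subset \CC$. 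When Euler's totient $\varphi(r) \leq 2$ (that is, $r \in \{1,2,3,4,6\}$, equivalently $m \in \{2,3,4,5,7\}$), $\Lambda_r$ is a genuine rank-$2$ lattice, $\hat\mu$ is periodic, and $|\hat\mu|$ attains $1$ on a discrete nontrivial set, ruling out super-polynomial convergence. When $\varphi(r) > 2$ --- exactly the cases $m = 6$ or $m > 7$ --- $\Lambda_r$ is a dense subgroup of rank $\varphi(r) \geq 4$, and $|\hat\mu(\xi)| < 1$ strictly for every $\xi \neq 0$.

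From here I would smooth $1_E$ by convolving with a bump function $\eta_\delta$ of width $\delta = n^{-A}$ for $A = A(\epsilon)$ large, then use Fourier inversion to rewrite the integrated error as an integral against $\hat\mu(\xi/\sqrt{n})^n - \exp(-\|\xi\|^2/(2r))$. Partition the $\xi$-domain into low-frequency ($\|\xi\| \leq n^{1/4}$), medium, and high-frequency ($\|\xi\| \geq n^A$) bands. The low-frequency contribution is controlled by an Edgeworth expansion (all moments of $\omega_i$ are finite), yielding super-polynomial decay; the high-frequency contribution is absorbed by the rapid decay of $\hat\eta_\delta$. Finally, the Fourier-level statement for $S_n$ is transferred to one for $S_n^r$ by a change of variables under the $r$-to-$1$ map $z \mapsto z^r$, handling the branch point at $0$ by isolating a small disk in which the error is already covered by the $(n-1)\delta_0$ correction.

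The main obstacle is the medium-frequency regime: establishing a quantitative uniform bound of the form $\sup\{|\hat\mu(\xi)| : n^{1/4} \leq \|\xi\| \leq n^A\} \leq 1 - n^{-o(1)}$, strong enough to make $|\hat\mu|^n$ super-polynomially small. Although $|\hat\mu(\xi)| < 1$ pointwise off the origin, density of $\Lambda_r$ in $\CC$ allows $|\hat\mu(\xi)|$ to be extremely close to $1$ at arbitrarily large $\xi$. Quantifying this requires Diophantine estimates on the values $\langle \xi, \zeta_r^j\rangle$ modulo $2\pi\ZZ$, essentially equivalent to Baker-type lower bounds for linear forms in logarithms of roots of unity. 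I expect this step to be the true bottleneck, and it is likely the reason the statement remains conjectural rather than proved.
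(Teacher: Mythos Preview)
The statement you are addressing is a \emph{conjecture} in the paper; the authors do not offer a proof, only the remark that it would strengthen part~(5) of the preceding theorem and that the omitted case $m=7$ appears not to admit a closed form. There is therefore no proof in the paper to compare your proposal against.

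Your outline is a sensible plan of attack and, importantly, isolates the correct structural dichotomy: the additive group $\Lambda_r = \ZZ[\zeta_{m-1}] \subset \CC$ is a genuine planar lattice exactly when $\varphi(m-1)\le 2$, i.e., $m\in\{2,3,4,5,7\}$, and is dense for $m=6$ or $m>7$. This is precisely the range in the conjecture, and it is the right explanation for why those values of $m$ are singled out. The Fourier decomposition into low/medium/high frequencies is the standard route to a quantitative local limit theorem, and the low- and high-frequency bands would indeed be handled by Edgeworth expansion and mollifier decay respectively.

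However, as you yourself note, the proposal is not a proof: the medium-frequency bound you need, namely a sub-polynomial upper bound on $\sup\{|\hat\mu(\xi)| : n^{1/4}\le\|\xi\|\le n^A\}$ strong enough to force $|\hat\mu|^n$ to decay faster than any power of $n$, is not established. Because $\Lambda_r$ is dense, $|\hat\mu(\xi)|$ has near-maxima at arbitrarily large $\xi$, and controlling them quantitatively is a genuine Diophantine problem (effectively a simultaneous approximation question for $\cos(2\pi j/r)$, $j=0,\ldots,r-1$). You are right that this is the real obstruction, and the paper's authors presumably left the statement as a conjecture for exactly this reason. So your write-up is a correct diagnosis of where the difficulty lies, but it does not close the gap.
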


\section{Sunflower Hypergraphs} \label{sect:sunflowers}
In previous work \cite{Cooper_Dutle_2012}, the authors computed the (normalized) spectrum of the adjacency hypermatrix of some classes of hypergraphs. Other than (possibly partial) matchings, the techniques used therein disallowed computation of the multiplicities of eigenvalues for any infinite class. Here, however, we determine the characteristic polynomial for an infinite class of \emph{sunflower} hypergraphs.

A \emph{sunflower} \( \mathcal{S}(m,q,k) \) hypergraph is defined as follows. Let \( m>0 \), and \( q, k \) satisfy \( 0<q<k. \) Let \( S \) be a set of \( q \) vertices (``seeds'') and define \( m \) disjoint sets \( \{E_i\}_{i=1}^m \) of \( k-q \) vertices each (``petals''). The edges of the hypergraph are the sets \( S\cup E_i \) for \( 1\leq i\leq m. \)

When \( k=2 \), sunflower graphs are normally referred to as \emph{stars.} In general, when \( q=k-1 \), \( \mathcal{S}(m,k-1, k) \) is a complete \( k \)-cylinder (i.e., \( k \)-partite, \( k \)-uniform hypergraph) with block sizes \( m \) and \( 1^{k-1} \), which the authors considered in \cite{Cooper_Dutle_2012}.

By using the product formula, we determine the spectrum, including multiplicities, for single-seed sunflowers of uniformity \( 3 \).

\begin{theorem} \label{sunflower_spec_mult} The characteristic polynomial for \( \mathcal{S}(n,1,3) \) is
\[\phi_{\mathcal{S}(n,1,3)}(\lambda) = \lambda^{(2n-2)4^n} \prod_{r=0}^n (\lambda^3-r)^{\binom{n}{r}3^r}.          \]
\end{theorem}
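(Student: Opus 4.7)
The approach mirrors the proof of Theorem \ref{thm:charpolyJJ}: apply the Poisson product formula with the seed vertex's variable $x_s$ chosen as the variable to eliminate and the seed polynomial $F_0$ as the distinguished polynomial. Labelling the two vertices of petal $i$ by $a_i, b_i$ with associated variables $y_i, z_i$, the eigenvalue system is
\[ F_0 = \lambda x_s^2 - \sum_{i=1}^n y_i z_i, \quad F_{y_i} = \lambda y_i^2 - x_s z_i, \quad F_{z_i} = \lambda z_i^2 - x_s y_i, \]
and $\phi_{\mathcal{S}(n,1,3)}(\lambda) = \Res(F_0, F_{y_1}, F_{z_1}, \ldots, F_{y_n}, F_{z_n})$. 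Setting $x_s = 0$ causes the overlines to reduce to the pure monomials $\overline{F}_{y_i} = \lambda y_i^2$ and $\overline{F}_{z_i} = \lambda z_i^2$, one per variable; factoring one $\lambda$ out of each via multiplicativity of the resultant, together with $\Res(y_1^2, z_1^2, \ldots, y_n^2, z_n^2) = 1$, yields $\Res(\overline{F}_{y_1}, \ldots, \overline{F}_{z_n}) = \lambda^{n \cdot 4^n}$. Raising to $d_0 = 2$ contributes a factor of $\lambda^{2n \cdot 4^n}$ to the formula.

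The heart of the calculation is determining $V = \VV(f_{y_1}, f_{z_1}, \ldots, f_{y_n}, f_{z_n})$, where $f_{y_i} = \lambda y_i^2 - z_i$ and $f_{z_i} = \lambda z_i^2 - y_i$. Unlike in the all-ones case, the equations defining $V$ decouple by petal: substituting $z_i = \lambda y_i^2$ into $f_{z_i}$ gives $y_i(\lambda^3 y_i^3 - 1) = 0$, so each petal independently contributes exactly four solutions---either $(y_i, z_i) = (0,0)$, or $(y_i, z_i) = (\zeta_3^{k_i}/\lambda, \zeta_3^{2k_i}/\lambda)$ for some $k_i \in \{0,1,2\}$. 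Thus $|V| = 4^n$, meeting the B\'{e}zout bound for $2n$ polynomials of degree $2$, so every point has multiplicity one.

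It then remains to evaluate $f_0(p) = \lambda - \sum_i y_i z_i$ at each $p \in V$ and assemble the product. The key cancellation is that on every active petal $y_i z_i = \zeta_3^{3k_i}/\lambda^2 = 1/\lambda^2$, independent of $k_i$, so if $r$ of the $n$ petals are active then $f_0(p) = (\lambda^3 - r)/\lambda^2$. Grouping the $\binom{n}{r} 3^r$ points with $r$ active petals and using $\sum_{r=0}^n \binom{n}{r} 3^r = 4^n$ to collect the denominators gives
\[ \prod_{p \in V} f_0(p) = \lambda^{-2 \cdot 4^n} \prod_{r=0}^n (\lambda^3 - r)^{\binom{n}{r} 3^r}, \]
and multiplying by the $\lambda^{2n \cdot 4^n}$ from the overlines produces the claimed formula. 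Conceptually this is lighter than Theorem \ref{thm:charpolyJJ} thanks to the petalwise decoupling of $V$; the main obstacle is the $\lambda$-exponent bookkeeping---balancing the $\lambda^{2n \cdot 4^n}$ from the overline resultant against the $\lambda^{-2 \cdot 4^n}$ implicit in $\prod_{p \in V} f_0(p)$.
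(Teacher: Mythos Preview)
Your proof is correct and follows essentially the same route as the paper's: eliminate the seed variable via the Poisson product formula, observe that the petal equations decouple into $n$ independent pairs each with four solutions so $|V|=4^n$ meets the B\'ezout bound, and then group points by the number $r$ of active petals to get $f_0(p)=(\lambda^3-r)/\lambda^2$ with multiplicity $\binom{n}{r}3^r$. The only cosmetic difference is that you compute $\Res(\overline{F}_{y_1},\ldots,\overline{F}_{z_n})=\lambda^{n\cdot 4^n}$ by homogeneity of the resultant in each set of coefficients, whereas the paper recognizes it as the characteristic polynomial of the zero hypermatrix; both give the same exponent.
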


\begin{proof} Label the \( 2n+1 \) vertices of \( \mathcal{S}(n,1,3) \) as \( v_0 \) for the seed, and pairs of vertices \( \{v_{i,1}, v_{i,2}\} \) for \( i \in [n]. \)

Then the equations defining the eigenpairs for our sunflower are the following:

\[F_0 = \lambda x_0 - \sum_{i=1}^n x_{i,1}x_{i,2}\]
and for each \( i\in [n], \) the pair of equations

\[\left\{
\begin{array}{c} F_{i,1} = \lambda x_{i,1}^2- x_0x_{i,2} \\
F_{i,2} = \lambda x_{i,2}^2- x_0x_{i,1} \end{array} \right\}. \]

Consider \(   \Res( \overline{F_{1,1}}, \overline{F_{1,2}}, \ldots, \overline{F_{n,1}}, \overline{F_{n,1}}) \). Note that \( \overline{F_{i,j}} = \lambda x_{i,j}^2 \) for every \( i\in [n] \) and all \( j\in [2], \)  which is the set of equations used to define the characteristic polynomial of the hypermatrix with all zero entries. This clearly has 0 as its only eigenvalue, and so
\[  \Res( \overline{F_{1,1}}, \overline{F_{1,2}}, \ldots, \overline{F_{n,1}}, \overline{F_{n,1}}) = \lambda^{2n2^{2n-1}}.\]
As mentioned in the introduction, this is a non-zero polynomial in \( \lambda \), and so the product formula applies.
This formula gives that

\begin{align} \phi_{\mathcal{S}(n,1,3)}(\lambda)  & = \Res(F_0, F_{1,1}, F_{1,2}, \ldots, F_{n,1}, F_{n,1}) \nonumber \\
& = \Res( \overline{F_{1,1}}, \overline{F_{1,2}}, \ldots, \overline{F_{n,1}}, \overline{F_{n,1}})^2 \prod_{p\in V} f_0(p)^{m(p)} \nonumber \\
 \label{sunfcalc} & = \lambda^{2n2^{2n}}\prod_{p\in V} f_0(p)^{m(p)}.
\end{align}

We next determine the points of the variety \( V \). Fix \( i\in [n] \), and consider the pair of polynomials
\[\left\{
\begin{array}{c}
f_{i,1} = x_{i,2} -\lambda x_{i,1}^2 \\
f_{i,2} = x_{i,1} -\lambda x_{i,2}^2.
\end{array}
\right\} \]

Let \( \zeta_3 \) denote a primitive third root of unity, and choose \( x_{i,1}\in \{ 0, \frac{1}{\lambda}, \frac{\zeta_3}{\lambda},\frac{\zeta_3^2}{\lambda}\} \). It's easy to verify that this value for \( x_{i,1} \) can be uniquely extended to a zero for our two equations \( f_{i,1}, f_{i,2} \)  by setting \( x_{i,2} = \lambda x_{i,1}^2 \). It's also easy to check that \[x_{i,1}x_{i,2} = \begin{cases} 0  &\text{ if } x_{i,1}=0 \\ \frac{1}{\lambda^2} & \text{ otherwise. } \end{cases}\]

A point of our variety must be a zero for each of the \( n \) pairs of polynomials \( \{f_{i,1}, f_{i,2}\} \). Because the variables appearing in any particular pair don't appear in any of the other polynomials describing the variety, a solution can be found by combining solutions for the pairs. We saw above that we can find a unique solution for a pair of these polynomials by choosing \( x_{i,1} \) from one of the four values  \(  0, \frac{1}{\lambda}, \frac{\zeta_3}{\lambda},\frac{\zeta_3^2}{\lambda} \). Hence we can build \( 4^n = 2^{2n} \) distinct zeros, which is the most we can have in a zero dimensional variety defined by \( 2n \) quadratics. Hence all points of the variety are given by the above construction, and the mulitplicity of each point is exactly one.

 Next, we note that  \( f_0(\bx) = \lambda - \sum_{i=1}^n x_{i,1}x_{i,2}. \) As we saw above, for any point \( p\in V \), each term in this sum is either 0 or \( 1/\lambda^2 \). If we let \( r = |\{ i \mid p_{i,1} \neq 0\}| \)  denote the number of nonzero pairs of  coordinates,  we see that \[f_0(p) = \lambda-\frac{r}{\lambda^2} = \frac{\lambda^3-r}{\lambda^2}.\] For a fixed \( r \), there are \( \binom{n}{r}3^r \) such points \( p\in V. \) Thus we can compute the second factor for our resultant as

 \begin{align*} \prod_{p\in V} f_0(p)^{m(p)}
 & = \prod_{r=0}^n\left(\frac{\lambda^3-r}{\lambda^2}\right)^{\binom{n}{r}3^r} \\
 & = \lambda^{-2\cdot 4^n} \prod_{r=0}^n \left(\lambda^3-r\right)^{\binom{n}{r}3^r}.\end{align*}

  Substituting this expression into \eqref{sunfcalc} gives the claimed result.
\end{proof}

Unfortunately, we were unable to extend the result to \( \mathcal{S}(n,q,k) \) sunflowers.  Following the technique above first requires that \( q=1. \) If we attempt to use an arbitrary \( k, \) we find that there are  \( k^{k-2}+1 \) distinct solutions for the \( k-1 \) equations arising from a particular petal. Hence there are
\( (k^{k-2}+1)^n \) solutions, whereas we need \( \left((k-1)^{k-1}\right)^n \) distinct solutions to use the product formula. These two values agree precisely when \( k=3 \), making the computation above possible.

\bibliography{Research}
\bibliographystyle{plain}


\end{document}